\theoremstyle{plain} 
\newtheorem{teo}{Theorem}[section]
\theoremstyle{definition}
\newtheorem{defi}[teo]{Definition}
\theoremstyle{plain} 
\newtheorem{prop}[teo]{Proposition}
\theoremstyle{plain}
\theoremstyle{plain}
\newtheorem{cor}[teo]{Corollary}	
\theoremstyle{definition}
\theoremstyle{definition}
\theoremstyle{definition}
\newtheorem{ese}[teo]{Example}
\theoremstyle{plain}
\newcommand*{\sn}{\unlhd \unlhd \ }
\DeclareMathOperator{\spr}{sp}
\DeclareMathOperator{\dn}{dn}
\DeclareMathOperator{\fix}{Fix}
\begin{document}

\title{Subnormalizers and the degree of nilpotence in finite groups}
\author{Pietro Gheri}
\address{Dipartimento di Matematica e Informatica ``U. Dini'',\newline
Universit\`a degli Studi di Firenze, viale Morgagni 67/a,
50134 Firenze, Italy.}
\email{pietro.gheri@unifi.it}
\begin{abstract}
We present a CFSG-free proof of the fact that the degree of nilpotence of a finite nonnilpotent group is less than $1/2$. 
\end{abstract}

\maketitle

\section{Introduction and tools}

Let $G$ be a finite group. The \textit{degree of commutativity of $G$} is the probability that two randomly chosen elements of $G$ commute. As a natural generalization, the \textit{degree of nilpotence of $G$} is defined to be the probability that two randomly chosen elements of $G$ generate a nilpotent subgroup, that is
\begin{equation} \label{degree of nilpotence}
\dn(G)=\dfrac{\left| \lbrace (x,y) \in G \ | \ \langle x,y \rangle \text{ is nilpotent } \rbrace \right|}{\left| G \right|^2} = \dfrac{1}{|G|} \sum_{x \in G} \dfrac{|Nil_G(x)|}{|G|}.
\end{equation}
where for any $x \in G$, $Nil_G(x)$ is the set of elements $y \in G$ such that $\langle x,y \rangle$ is nilpotent.

In 1978 Gustafson proved that the degree of commutativity of a finite nonabelian group is less or equal than $5/8$ (\cite{gustafson:degcom}). In \cite{guralnick:solvable} Guralnick and Wilson proved an analogous theorem for the degree of nilpotence. 
\begin{teo} \label{dn12 senza classificazione} 
Let $G$ be a finite group. If $\dn(G) >1/2 $ then $G$ is nilpotent. The value $1/2$ is tight.
\end{teo}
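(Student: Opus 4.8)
The plan is to prove the contrapositive in its sharp form: if $G$ is not nilpotent then $\dn(G) \le 1/2$, with $S_3$ as the extremal witness. I work throughout with the formula \eqref{degree of nilpotence}, so the task is to bound the average of $|Nil_G(x)|/|G|$ over $x \in G$. The first observation is that the elements realizing the maximal value $|Nil_G(x)| = |G|$ are exactly those of the hypercentre: the set $\lbrace x : Nil_G(x) = G \rbrace$ is the characteristic subgroup $Z_\infty(G)$, since $\langle x, y\rangle$ is nilpotent for every $y$ precisely when $x$ is a (right Engel) hypercentral element. I intend to use the subnormalizer machinery of the previous section to turn the weaker condition ``$\langle x\rangle$ is subnormal in $\langle x, y\rangle$'' into quantitative control on $|Nil_G(x)|$, exploiting that for a $p$-element $x$ one has $\langle x\rangle \unlhd\unlhd \langle x,y\rangle$ if and only if $x \in O_p(\langle x, y\rangle)$, and that $Nil_G(x) \subseteq S_G(x)$.

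First I would set up an induction on $|G|$ through quotients. I expect the monotonicity $\dn(G) \le \dn(G/N)$ for every $N \trianglelefteq G$ (a Gallagher-type submultiplicative inequality); granting this, apply it with $N = Z_\infty(G)$. If $Z_\infty(G) \ne 1$, then $G/Z_\infty(G)$ is a strictly smaller group which is again nonnilpotent (its hypercentre is trivial, and a nontrivial nilpotent quotient would force $G = Z_\infty(G)$), so induction yields $\dn(G) \le \dn(G/Z_\infty(G)) \le 1/2$. This reduces everything to the \emph{core case} in which $Z_\infty(G) = 1$, so that $x = 1$ is the unique element with $Nil_G(x) = G$.

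In the core case the target becomes $\sum_{x \ne 1} |Nil_G(x)| \le |G|^2/2 - |G|$. I would first reduce the count to elements of prime-power order, since the nilpotence of $\langle x, y\rangle$ is governed Sylow-by-Sylow, and then estimate $|Nil_G(x)|$ for a nontrivial $p$-element $x$ via its subnormalizer. Nonnilpotence and triviality of the hypercentre guarantee a prime $p$ with a non-normal Sylow $p$-subgroup, so that $x$ is not forced into a normal $p$-component; the condition $x \in O_p(\langle x, y\rangle)$ then confines the admissible $y$ to a union of cosets of a proper subgroup, giving $|Nil_G(x)| \le |G|/2$ with quantifiable slack. Summing these bounds, and verifying the slack absorbs the $-|G|$, produces the constant $1/2$.

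The main obstacle is precisely the \emph{sharpness} of the constant $1/2$, rather than a weaker Gustafson-type bound: the naive per-element inequality $|Nil_G(x)| \le |G|/2$ only delivers $\dn(G) \le 1/2 + |Z_\infty(G)|/(2|G|)$, so the entire difficulty lies in showing that the aggregate deficit across non-hypercentral elements exactly compensates the central contribution. I expect this to require a reduction to a minimal nonnilpotent (Schmidt-type) configuration—roughly $F(G)$ together with a non-normal Sylow complement acting on it—where the subnormalizer indices can be computed exactly instead of merely bounded, reproducing the equality case. Tightness of $1/2$ then follows from the direct computation $\dn(S_3) = 18/36 = 1/2$, consistent with this extremal analysis, and it is here that working through subnormalizers rather than through classification-dependent structure theorems keeps the argument free of CFSG.
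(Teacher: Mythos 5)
Your two opening reductions are correct and are a genuinely different opening from the paper's: the monotonicity $\dn(G)\le\dn(G/N)$ holds (nilpotent pairs map to nilpotent pairs, and each pair in $G/N\times G/N$ has exactly $|N|^2$ preimages), and passing to $G/Z_\infty(G)$ legitimately reduces to the case of trivial hypercentre. But the quantitative heart of your argument fails. The claim that in this core case every nontrivial $x$ satisfies $|Nil_G(x)|\le |G|/2$ (let alone with ``quantifiable slack'') is false, and the paper itself supplies the counterexample: the proposition in Section 3 constructs groups $G_k$ with $Z(G_k)=1$ --- hence $Z_\infty(G_k)=1$, so exactly your core case --- containing elements $v\in O_2(G_k)$ with $|Nil_{G_k}(v)|/|G_k|\to 1$. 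Killing the hypercentre gives you no control over elements of the \emph{Fitting} subgroup, and it is precisely those elements whose $Nil_G$-sets can occupy almost all of $G$. This is why the paper abandons $Nil_G(x)$ in favour of the subnormalizer $S_G(x)$ and measures elements against $\mathbf{F}(G)$ rather than $Z_\infty(G)$: the workable dichotomy is $x\in\mathbf{F}(G)$ (where no per-element bound is possible) versus $x\notin\mathbf{F}(G)$ (where $\spr_G(x)\le 1/3$, hence also $|Nil_G(x)|\le |G|/3$, by Corollary \ref{fitting un terzo}).

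Even for elements outside $\mathbf{F}(G)$, your justification is not yet an argument: neither $Nil_G(x)$ nor $S_G(x)$ is a union of cosets of a proper subgroup in any evident sense, and the actual bound requires Casolo's counting formula (Theorem \ref{form subnor theorem}) combined with the inductive zipper-lemma argument of Proposition \ref{uno su p piu uno}; nothing in your sketch substitutes for that. Finally, by your own accounting the per-element bound, even were it true, yields only $\dn(G)\le 1/2+1/(2|G|)$, and you defer closing this gap to an unexecuted ``Schmidt-type'' extremal analysis. In the paper this residual difficulty is exactly the case $[G:\mathbf{F}(G)]\in\{2,3\}$ (the case $[G:\mathbf{F}(G)]\ge 4$ being settled by the $1/3$ bound), and it is resolved not by per-element estimates but by taking a minimal counterexample, proving it is a Frobenius group $N\rtimes\langle x\rangle$ with $N=\mathbf{F}(G)$ elementary abelian and $|x|=q\in\{2,3\}$, and computing $\dn$ exactly; that computation is what isolates $S_3$ and produces the sharp constant $1/2$. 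As it stands, your proposal is missing both the correct per-element bound and the terminal structural analysis.
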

This was obtained as a corollary of a similar result concerning the probability that two elements generate a solvable subgroup. However, while Gustafson's proof was very short and only involved basic tools, Guralnick and Wilson's proof used the Classification of finite simple groups. In that work the authors ask if there is a Classification-free proof of their result on the degree of nilpotence. In this paper we present such a proof.

We immediately clarify that the tightness of the value $1/2$ is trivial, since one can verify directly that $\dn (S_3) =1/2$.

The first step in our proof consists of replacing the set $Nil_G(x)$ in (\ref{degree of nilpotence}) with another set, that is $S_G(\langle x \rangle)$,  the \textit{Wielandt's subnormalizer of $\langle x \rangle$ in $G$}. The reasons of this substitution will be clarified in the last section.

\begin{defi}[\cite{lennox:subnormal}, page 238]\label{subnormalizer} Let $H$ be a subgroup of $G$. The \textit{subnormalizer of $H$ in $G$} is the set
\[
S_G(H) = \lbrace g \in G \ | \ H \sn \langle H,g \rangle \rbrace.
\]
\end{defi}

Wielandt's subnormality criterion can be restated using this definition: a subgroup $H$ of a finite group $G$ is subnormal if and only if $S_G(H)=G$.

A crucial result for our proof is a theorem proved by Casolo in 1990 which gives a formula to count the elements of the subnormalizer of a $p$-subgroup. Given a prime $p$ dividing the order of $G$ and a $p$-subgroup $H$ of $G$, we write $\lambda_G(H)$ for the number of Sylow $p$-subgroups of $G$ containing $H$. When $H = \langle x \rangle $ is a cyclic subgroup we write $S_G(x)$ and $\lambda_G(x)$, in place of $S_G(\langle x \rangle)$ and $\lambda(\langle x \rangle)$.
\begin{teo}[\cite{casolo:subnor}] \label{form subnor theorem}  Let $H$ be a $p$-subgroup of $G$ and $P \in Syl_p(G)$. Then
\begin{equation} \label{formulasubnor}
|S_G(H)|=\lambda_G(H) |N_G(P)|.
\end{equation}
\end{teo}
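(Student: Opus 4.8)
The plan is to work throughout with the characterization that, for a $p$-subgroup $H$ and $K=\langle H,g\rangle$, one has $g\in S_G(H)$ if and only if $H\le O_p(K)$. This holds because a $p$-subgroup is subnormal in a finite group exactly when it lies in the largest normal $p$-subgroup: every subgroup of the nilpotent group $O_p(K)$ is subnormal, and conversely the join of all subnormal $p$-subgroups is a normal $p$-group. Equivalently, $g\in S_G(H)$ iff $H$ is contained in every Sylow $p$-subgroup of $\langle H,g\rangle$. As a building block I would first record the inclusion $N_G(Q)\subseteq S_G(H)$ for every $Q\in\mathrm{Syl}_p(G)$ with $H\le Q$: if $g\in N_G(Q)$ then $\langle H,g\rangle\le N_G(Q)$, so $Q\cap\langle H,g\rangle$ is a normal $p$-subgroup of $\langle H,g\rangle$ containing $H$, whence $H\le O_p(\langle H,g\rangle)$ and $g\in S_G(H)$.

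Next I would reformulate the target. Since all Sylow $p$-normalizers are conjugate and $P^{x^{-1}}$ runs over $\mathrm{Syl}_p(G)$, hitting each Sylow exactly $|N_G(P)|$ times as $x$ runs over $G$, one obtains $\lambda_G(H)\,|N_G(P)|=|\{x\in G: H^x\le P\}|$, the size of the transporter of $H$ into the fixed Sylow $P$. Thus Theorem~\ref{form subnor theorem} is equivalent to the clean identity $|S_G(H)|=|\{x\in G: H^x\le P\}|$, and the whole problem reduces to matching these two cardinalities. It is worth stressing that a naive attempt—taking $\bigcup_{Q\supseteq H}N_G(Q)$ for the left-hand side—cannot succeed on the nose: already for $H=1$ one has $\bigcup_Q N_G(Q)\subsetneq G=S_G(1)$ in general (for instance in $S_3$), so the identity is a genuine counting statement rather than an equality of sets.

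I would then prove the identity by induction on $|G|$, distinguishing whether $H$ is subnormal in $G$. If $H\sn G$ then $H\le O_p(G)$, so $H$ lies in every Sylow, $\lambda_G(H)=|\mathrm{Syl}_p(G)|$, and $S_G(H)=G$; both sides equal $|G|$ (this also covers $H=1$). If $H$ is not subnormal, then $M:=N_G(H)$ is a proper subgroup in which $H$ is normal, so the statement holds inside $M$ by induction; the task is to glue the local data over $M$ and its conjugates to the global count, using that any witnessing chain $H\trianglelefteq H_1\trianglelefteq\cdots\trianglelefteq\langle H,g\rangle$ forces $H_1\le N_G(H)$, so the first step of every subnormal embedding of $H$ is controlled by $M$.

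The main obstacle I expect is exactly this inductive gluing: passing from the behaviour of $S_M(H)$ and $\lambda_M(H)$ inside the normalizer back to $S_G(H)$ and $\lambda_G(H)$ while keeping the Sylow count \emph{exact}. One must show that each element of $S_G(H)$ is accounted for with total multiplicity one across the relevant conjugates of $M$, neither over- nor under-counting; the $S_3$ example above shows there is no slack whatsoever. Consequently the bookkeeping—presumably organized through a careful double count of pairs $(g,Q)$ with $H\le Q\in\mathrm{Syl}_p(G)$, or directly through the transporter description—must be carried out with the precise Sylow congruences in hand rather than mere inclusions.
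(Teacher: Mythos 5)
Your preparatory steps are all correct: the characterization $g\in S_G(H)$ iff $H\le O_p(\langle H,g\rangle)$, the inclusion $N_G(Q)\subseteq S_G(H)$ for every $Q\in \mathrm{Syl}_p(G)$ containing $H$, the transporter identity $\lambda_G(H)\,|N_G(P)|=|\{x\in G \ : \ H^x\le P\}|$, and the case where $H$ is subnormal in $G$, in which both sides equal $|G|$. But there your proof ends. The case where $H$ is \emph{not} subnormal is the entire content of the theorem, and your final paragraphs describe the obstacle (``inductive gluing'', bookkeeping that ``must be carried out'') rather than overcome it. For context: the paper itself contains no proof of this statement --- it is quoted from Casolo's article, where it is established by a genuinely nontrivial induction --- so your text can only be judged on completeness, and it is not complete.

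Moreover, the specific inductive route you sketch gives no traction. Applying the inductive hypothesis to $M=N_G(H)$ is vacuous: since $H\trianglelefteq M$, the statement inside $M$ reads $|M|=\lambda_M(H)\,|N_M(Q_0)|$ with $S_M(H)=M$ and $\lambda_M(H)=n_p(M)$, i.e.\ it reduces to Sylow's theorem and says nothing about elements of $S_G(H)$ outside $M$. Likewise, the observation that the first term $H_1$ of a chain $H\trianglelefteq H_1\trianglelefteq\cdots\trianglelefteq\langle H,g\rangle$ lies in $M$ constrains the \emph{chain}, not the element $g$: there is no natural assignment of each $g\in S_G(H)$ to a conjugate of $M$ with controllable fibres, and, as your own $S_3$ remark shows, $S_G(H)$ is not a union of cosets of anything. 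A reduction that does give traction, and which you could have used, is passing to $G/O_p(G)$: writing $N=O_p(G)$, one checks that $H \sn \langle H,g\rangle$ if and only if $HN/N \sn \langle H,g\rangle N/N$ (using that $H \sn HN$ because $HN$ is a $p$-group, and that subnormality in $\langle H,g\rangle N$ restricts to the intermediate subgroup $\langle H,g\rangle$), while $\lambda_G(H)=\lambda_{G/N}(HN/N)$ and $|N_G(P)|=|N|\,|N_{G/N}(P/N)|$; hence one may assume $O_p(G)=1$. But even then, for $H\neq 1$ every $g\in S_G(H)$ generates a proper subgroup with $H$, so $S_G(H)=\bigcup\{K<G \ : \ H\le O_p(K)\}$, and computing the exact cardinality of this union of proper subgroups is precisely the hard counting step that your proposal leaves open.
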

The proof of this result does not rely on CFSG.

With the degree of nilpotence in mind, we can then define a new probability where, as we said before, $Nil_G(x)$ is replaced by $S_G(x)$. If we set $\spr_G(x)=|S_G(x)|/|G|$ we have that $\spr_G(x) \geq |Nil_G(x)|/|G|$ since every subgroup of a finite nilpotent group is subnormal. Then we set
\[
\spr (G) = \frac{1}{|G|} \sum_{x \in G} \frac{|S_G(x)|}{|G|}.
\]
Moreover if $x$ is a $p$-element, using Theorem \ref{form subnor theorem} the ratio $\spr_G(x)$ can be written as
\begin{equation} \label{scrittura spr}
\spr_G(x)=\frac{|S_G(x)|}{|G|} = \frac{\lambda_G(x)|N_G(P)|}{|G|}= \frac{\lambda_G(x)}{n_p(G)}  
\end{equation}
where $n_p(G)$ is the number of Sylow $p$-subgroups of $G$. The value $\spr_G(x)$ is then the percentage of Sylow $p$-subgroup of $G$ containing $x$.

\section{Proof}

The main step for our proof of Theorem \ref{dn12 senza classificazione} is the following probabilistic version for cyclic subgroups of Wielandt's subnormality criterion.

\begin{prop} \label{uno su p piu uno}
Let $p$ be a prime dividing the order of $G$, $x \in G$ be a $p$-element of order $p^r$ and $1 \leq k \leq r$. If $\spr_G(x)>1/(p^k+1)$ then $x^{p^{k-1}} \in O_p(G)$.
\end{prop}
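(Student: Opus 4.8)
The plan is to prove this by induction on $k$, using the formula $\spr_G(x) = \lambda_G(x)/n_p(G)$ from (\ref{scrittura spr}), which tells us that $\spr_G(x)$ is exactly the fraction of Sylow $p$-subgroups containing $x$. The conclusion $x^{p^{k-1}} \in O_p(G)$ is equivalent to saying that $x^{p^{k-1}}$ lies in \emph{every} Sylow $p$-subgroup of $G$, since $O_p(G)$ is the intersection of all Sylow $p$-subgroups. So the whole statement becomes a counting statement: if more than a $1/(p^k+1)$ fraction of Sylow subgroups contain $x$, then \emph{all} of them contain $x^{p^{k-1}}$.

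\medskip

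For the base case $k=1$, the hypothesis is $\spr_G(x) > 1/(p+1)$, and I want to conclude $x^{p^{0}} = x \in O_p(G)$, i.e.\ $x$ lies in every Sylow $p$-subgroup. The natural approach is a contradiction argument: suppose some Sylow $p$-subgroup $P$ does not contain $x$. I would try to compare the number of Sylow subgroups containing $x$ against the total, getting leverage from the action of $\langle x \rangle$ (or of $P$) on the set of Sylow subgroups by conjugation. The key combinatorial input should be that if $x \notin P$, then the orbits and the incidence pattern force the fraction $\lambda_G(x)/n_p(G)$ to be at most $1/(p+1)$; the factor $p+1$ strongly suggests looking at a situation where $x$ normalizes a set of Sylow subgroups of which at least $p$ (a full $\langle x\rangle$-orbit outside, plus accounting) fail to contain $x$, against at most one that does.

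\medskip

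For the inductive step, I would pass from $x$ to the element $y = x^{p}$ (which has order $p^{r-1}$) or otherwise reduce the exponent, relating $\spr_G(x)$ to $\spr_G(x^p)$. The crucial monotonicity is that any Sylow subgroup containing $x$ also contains $x^{p^{k-1}}$ and hence $x^{p^{j}}$ for $j \geq k-1$, so $\lambda_G(x) \leq \lambda_G(x^{p})$ and thus $\spr_G(x) \leq \spr_G(x^p)$. Chaining the inductive hypothesis for exponent $k-1$ applied to a suitable power, together with the improved threshold $1/(p^k+1)$, should yield that $x^{p^{k-1}}$ lies in every Sylow $p$-subgroup. I expect the bookkeeping here — keeping track of which power of $x$ one controls at each stage, and matching the threshold $1/(p^k+1)$ exactly — to require care.

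\medskip

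The main obstacle, I anticipate, is the base case counting argument: extracting the exact bound $\spr_G(x) \le 1/(p+1)$ when $x \notin O_p(G)$. The clean way is probably to work inside $N = N_G(P)$ for a fixed $P$ and count incidences between elements conjugate to $x$ and Sylow subgroups via the $N_G(P)$-orbit structure, or to use Theorem \ref{form subnor theorem} to reinterpret $\lambda_G(x)$ as a subnormalizer index and argue that the failure of $x$ to be subnormal in $\langle x, P\rangle$-type overgroups costs a factor of at least $p+1$. Making this incidence count tight, rather than merely asymptotic, is where the real work lies.
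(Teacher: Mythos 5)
Your reformulation is right --- by (\ref{scrittura spr}) the hypothesis says that more than a $1/(p^k+1)$ fraction of the Sylow $p$-subgroups contain $x$, and $O_p(G)$ is the intersection of all of them --- but both halves of your plan break down. The induction on $k$ cannot be closed: the monotonicity $\spr_G(x)\le\spr_G(x^p)$ is true but runs \emph{against} the thresholds, since the hypothesis for parameter $k$, namely $\spr_G(x)>1/(p^k+1)$, only yields $\spr_G(x^p)>1/(p^k+1)$, whereas invoking the inductive statement for $k-1$ on $x^p$ would require the strictly stronger bound $\spr_G(x^p)>1/(p^{k-1}+1)$. The claim for $k$ is genuinely stronger than anything obtainable by chaining the claim for $k-1$; accordingly, the paper does no induction on $k$ at all, treating all $k$ uniformly in a single argument.

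More seriously, the base case $k=1$ --- which you correctly flag as ``where the real work lies'' --- is exactly the hard content of the proposition, and the counting strategies you sketch cannot produce it. A $p$-element normalizes a Sylow $p$-subgroup $P$ if and only if it lies in $P$ (since $P$ is the unique Sylow $p$-subgroup of $N_G(P)$), so the $\langle x\rangle$-orbits on the Sylow subgroups not containing $x$ all have length divisible by $p$; pure orbit counting therefore gives only $n_p(G)\equiv\lambda_G(x) \pmod{p}$, hence $n_p(G)\ge\lambda_G(x)+p$, which is far weaker than the required $n_p(G)\ge(p+1)\lambda_G(x)$ once $\lambda_G(x)$ is large. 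The $k=1$ case is a probabilistic form of Baer's theorem ($x\in O_p(G)$ if and only if $\langle x,x^g\rangle$ is a $p$-group for all $g\in G$), and it needs machinery of that strength. What the paper actually does is: by pigeonhole, the hypothesis forces any $p^k+1$ distinct conjugates of $x$ to have two lying in a common Sylow subgroup; applying this to the specific conjugates $y_0=x$, $y_i=x^{gx^{i-1}}$ for $1\le i\le p^k$ shows that for \emph{every} $g\in G$ some pair generates a $p$-group; then an induction on $|G|$ combined with Wielandt's zipper lemma \cite[Lemma 7.3.1]{lennox:subnormal} places $x^{p^{k-1}}$ in a unique, non-normal maximal subgroup $M$, and a short conjugation computation forces every $g\in G$ into $M$, a contradiction. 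Your closing suggestion (that failure of subnormality ``costs a factor of at least $p+1$'') is not an argument but a restatement of the proposition itself; without the zipper-lemma step or an equivalent group-theoretic input, the incidence count cannot be made tight.
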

\begin{proof}
By \ref{formulasubnor} $\spr_G(x)= \lambda_G(x) / n_p(G)$. Let $y_1, \dots , y_{p^k+1}$ be $p^k+1$ distinct conjugates of $x$. Then there exists $P \in Syl_p(G)$ such that two of these conjugates both belong to $P$. For if not, 
\[
U_i = \lbrace P \in Syl_p(G) \ | \ y_i \in P \rbrace, \ i \in \lbrace 1, \dots, p^k+1 \rbrace,
\]
would be disjoint sets, each of cardinality $\lambda_G(x)$ (since the function $\lambda_G$ is constant on the conjugacy classes of $p$-elements) and we would have
\[
n_p(G) \geq \left| \bigcup_{i=1}^{p^k+1} U_i \right| = (p^k+1)\lambda_G(x) 
\]
against the hypothesis.

Let $g \in G$ and set $y_0=x$, $y_i=x^{gx^{i-1}}$ for $1 \leq i \leq p^k$. We then have two cases: either there exist $0 \leq i< j \leq p^k$ such that $y_i=y_j$ or the set of the $y_i$'s has cardinality $p^k+1$. In any case we then have that the following statement holds.
\begin{equation} \tag{$\ast$}\label{cond2}
\text{ There exist } 0 \leq i < j \leq p^k \text{ such that } \langle y_i,y_j \rangle \text{ is a $p$-group}
\end{equation}
We want to prove that if (\ref{cond2}) holds for all $g \in G$ then $x^{p^{k-1}} \in O_p(G)$. Arguing by induction on $|G|$ we can suppose that $x^{p^{k-1}} \in O_p(H)$, that is $ \langle x^{p^{k-1}} \rangle$ is subnormal in $H$, for all proper subgroups $H$ of $G$ containing $x^{p^{k-1}}$. By the Wielandt's zipper lemma \cite[Lemma 7.3.1]{lennox:subnormal} $x^{p^{k-1}}$ is contained in a unique maximal subgroup $M$ of $G$, which is not normal in $G$.

If $1 \leq s < p^k$ then $x^{p^{k-1}} \in \langle x^s \rangle$ and so $M$ is the unique maximal subgroup containing $x^s$. Moreover if for some $a \in G, \ (x^s)^a \in M$ then $x^s \in M^{a^{-1}}$ and so $M=M^{a^{-1}}$. Since $M$ is maximal and is not normal in $G$, we have $a \in M$.

Let then $g \in G$, $y_i$ be defined as above and suppose that (\ref{cond2}) holds. We separately consider two cases: one in which $i=0$ and the other in which $i \geq 1$. If $i=0$ then $\langle x, y_j \rangle$ is a $p$-group, which implies that $y_j =x^{gx^{j-1}}\in M$. It follows that $gx^{j-1} \in M$ and so $g \in M$. If instead $i \geq 1$ then $\langle y_i,y_j \rangle$ is a $p$-group and so is the subgroup 
\[
\langle x^{(x^{j-i})^{g^{-1}}},x\rangle= \langle y_i,y_j \rangle^{x^{-(i-1)}g^{-1}}. 
\]
It follows that $x^{(x^{j-i})^{g^{-1}}} \in M$, so $(x^{j-i})^{g^{-1}} \in M$ and finally $g^{-1} \in M$. 

We proved that if (\ref{cond2}) holds then $g \in M$. It follows that $G \leq M$, a contradiction.
\end{proof}

The bound in the previous proposition is the best possible, as we can see looking at $G=PSL(2,p)$. We have that each Sylow $p$-subgroup of $G$ has cardinality $p$, $n_p(G)=p+1$ and $O_p(G)=1$. Then if $x \in G$ is a $p$-element we have that $\spr(G)=1/(p+1)$. 

\begin{cor} \label{fitting un terzo} Let $x \in G$ be an element that does not lie in the Fitting subgroup of $G$. Then $\spr_G(x) \leq 1/3$.
\end{cor}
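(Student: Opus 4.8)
The plan is to reduce the statement to a single prime and then apply Proposition \ref{uno su p piu uno} with $k=1$. Since the Fitting subgroup is nilpotent, it decomposes as the direct product $F(G) = \prod_q O_q(G)$ of its Sylow subgroups; consequently an element lies in $F(G)$ if and only if each of its prime-power parts lies in the corresponding $O_q(G)$. First I would use the hypothesis $x \notin F(G)$ to produce a prime $p$ for which the (necessarily nontrivial) $p$-part $x_p$ of $x$ satisfies $x_p \notin O_p(G)$; say $x_p$ has order $p^r$ with $r \geq 1$.

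Next I would invoke Proposition \ref{uno su p piu uno} in contrapositive form with $k=1$. Since $x_p^{p^{0}} = x_p \notin O_p(G)$, the proposition forces $\spr_G(x_p) \leq 1/(p+1)$. Because $p \geq 2$ this already yields $\spr_G(x_p) \leq 1/3$, with equality attainable only when $p=2$.

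The remaining—and main—step is to transfer this bound from the $p$-element $x_p$ back to $x$, by proving the containment $S_G(x) \subseteq S_G(x_p)$, which gives $\spr_G(x) \leq \spr_G(x_p)$. To establish it I would take $g \in S_G(x)$, so that $\langle x \rangle \sn K$ where $K = \langle x, g \rangle$, and argue as follows. The subgroup $\langle x_p \rangle$ is the Sylow $p$-subgroup of the cyclic group $\langle x \rangle$, hence characteristic in $\langle x \rangle$; since a characteristic subgroup of a subnormal subgroup is again subnormal, $\langle x_p \rangle \sn K$. Finally, as $\langle x_p \rangle \leq \langle x_p, g \rangle \leq K$ and subnormality passes to intermediate subgroups, $\langle x_p \rangle \sn \langle x_p, g \rangle$, that is $g \in S_G(x_p)$. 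Chaining the three steps then gives $\spr_G(x) \leq \spr_G(x_p) \leq 1/(p+1) \leq 1/3$.

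I expect the containment $S_G(x) \subseteq S_G(x_p)$ to be the crux of the argument. The two facts about subnormality it relies upon—that a characteristic subgroup of a subnormal subgroup is subnormal, and that subnormality descends to intermediate subgroups—are standard (and available in \cite{lennox:subnormal}), but it is essential that $\langle x_p \rangle$ be genuinely \emph{characteristic} in $\langle x \rangle$ so that the first of them applies; this is exactly where the cyclicity of $\langle x \rangle$ is used. Everything else is bookkeeping with the Fitting decomposition together with a direct appeal to the already-proven Proposition \ref{uno su p piu uno}.
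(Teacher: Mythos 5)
Your proof is correct and follows essentially the same route as the paper: choose a prime $p$ with $x_p \notin O_p(G)$ and apply Proposition \ref{uno su p piu uno} with $k=1$ to $x_p$. The paper asserts the inequality $\spr_G(x) \leq \spr_G(x_p)$ without comment, and your verification of it via the containment $S_G(x) \subseteq S_G(x_p)$ (characteristic subgroup of a subnormal subgroup is subnormal, then pass to the intermediate subgroup $\langle x_p, g \rangle$) is exactly the standard argument the paper leaves implicit.
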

\begin{proof}
Let $p$ be a prime dividing the order of $x$ such that the $p$-part $x_p$ of $x$ does not lie in $O_p(G)$. Then by Proposition \ref{uno su p piu uno} we have $\spr_G(x) \leq \spr_G(x_p) \leq 1/(p+1)$.
\end{proof}

We can now prove Theorem \ref{dn12 senza classificazione}.

\begin{proof}
First of all we observe that $\left[ G: \mathbf{F}(G) \right] \leq 3$. For if $\left[ G: \mathbf{F}(G) \right] \geq 4$ then by Corollary \ref{fitting un terzo}
\begin{equation}
\begin{split}
\dn(G) & \leq \spr(G) = \frac{1}{|G|} \sum_{x \in G} \frac{|S_G(x)|}{|G|}  \\
& =\frac{1}{|G|} \left( |\mathbf{F}(G)|+ \sum_{x \notin \mathbf{F}(G)} \frac{|S_G(x)|}{|G|} \right)  \\
& \leq \frac{1}{|G|} \left( |\mathbf{F}(G)|+ \frac{1}{3} \frac{|G|-|\mathbf{F}(G)|}{|G|} \right)  \\
& = \frac{|G| + 2 |\mathbf{F}(G)|}{3|G|} \leq \frac{1}{3} + \frac{1}{6} =\frac{1}{2}.
\end{split}
\end{equation}
Thus $\left[ G: \mathbf{F}(G) \right] \in \lbrace 2,3 \rbrace$. Let $G$ be a counterexample of minimal order. It is an easy exercise to verify that setting $N:=\mathbf{F}(G)$, we have $G=N\langle x \rangle $ with $|x|=q \in \lbrace 2,3 \rbrace$ and $N$ is an elementary abelian group of order say $p^k$, for some prime $p \neq q$. Moreover $C_N(x)=1$ and $G$ is a Frobenius group with kernel $N$. For every $1 \neq a \in N$ we have
\[
Nil_G(a)=N,
\]
while for every $y \notin N$ we have
\[
Nil_G(y)=\langle y \rangle.
\]
Therefore
\begin{equation*}
\begin{split}
\dn(G)= \frac{1}{|G|} \sum_{g \in G} \frac{|Nil_G(g)|}{|G|} = \frac{1}{|G|} \left( \sum_{g \in N} \frac{|Nil_G(g)|}{|G|}+ \sum_{g \notin N} \frac{|Nil_G(g)|}{|G|} \right)  \\
= \frac{1}{p^kq} \left( 1+(p^k-1) \frac{1}{ q}+ (q-1) p^k \frac{1}{p^k} \right)= \frac{1}{p^kq}+\frac{p^k-1}{p^k}\frac{1}{q^2}+\frac{q-1}{p^kq},
\end{split}
\end{equation*}
which is greater then $1/2$ if and only if $q=2$ and $p^k=3$, that is if and only if $G \simeq S_3$. By direct calculation one see that $\dn(S_3)=1/2$ and so we have the thesis.
\end{proof}

The next theorem is another result with the flavour of Gustafson's theorem, concerning the probability $\spr(G)$.

\begin{teo} \label{sp nil}
If $\spr(G)>2/3 $ then $G$ is nilpotent, and the bound is the best possible.
\end{teo}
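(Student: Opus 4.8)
The plan is to reuse, almost verbatim, the averaging argument that opens the proof of Theorem \ref{dn12 senza classificazione}. The key observation is that there the crude estimate via $\spr(G)$ already delivered the value $2/3$ at index $[G:\mathbf{F}(G)]=2$, and $2/3$ is precisely the threshold we now want; so for $\spr(G)$ the first step of that proof is already the whole proof, with none of the subsequent Frobenius-group analysis required.

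Concretely, I would first split the defining sum for $\spr(G)$ according to whether $x$ lies in $\mathbf{F}(G)$:
\[
\spr(G) = \frac{1}{|G|}\left( \sum_{x \in \mathbf{F}(G)} \spr_G(x) + \sum_{x \notin \mathbf{F}(G)} \spr_G(x)\right).
\]
For the first sum I use the trivial bound $\spr_G(x)=|S_G(x)|/|G| \le 1$ (in fact $\spr_G(x)=1$, since $\langle x \rangle$ is subnormal in $G$ when $x \in \mathbf{F}(G)$, but $\le 1$ is all I need). For the second sum I apply Corollary \ref{fitting un terzo}, which gives $\spr_G(x) \le 1/3$ for every $x \notin \mathbf{F}(G)$. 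This yields
\[
\spr(G) \le \frac{1}{|G|}\left( |\mathbf{F}(G)| + \tfrac{1}{3}\,(|G|-|\mathbf{F}(G)|)\right) = \frac{1}{3} + \frac{2}{3}\cdot\frac{1}{[G:\mathbf{F}(G)]}.
\]

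Now I would assume $G$ is not nilpotent, so that $\mathbf{F}(G)\neq G$ and $[G:\mathbf{F}(G)] \ge 2$. Substituting into the displayed inequality gives $\spr(G) \le \tfrac{1}{3}+\tfrac{1}{3}=\tfrac{2}{3}$; contrapositively, $\spr(G) > 2/3$ forces $G$ to be nilpotent. For the sharpness I would again exhibit $S_3$: its Fitting subgroup is the cyclic group of order $3$, contributing $\spr_G(x)=1$ for each of its three elements, while each of the three transpositions generates its own Sylow $2$-subgroup, so $\lambda_{S_3}(x)=1$ and, by (\ref{scrittura spr}), $\spr_{S_3}(x)=1/n_2(S_3)=1/3$ (as $n_2(S_3)=3$). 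Summing gives $\spr(S_3)=\bigl(3\cdot 1 + 3\cdot\tfrac{1}{3}\bigr)/6 = 2/3$, so the constant cannot be lowered.

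I do not expect a serious obstacle: the entire content is already packaged in Corollary \ref{fitting un terzo} (hence in Proposition \ref{uno su p piu uno}). The only conceptual point worth flagging is that, unlike the degree-of-nilpotence statement — where the index-$2$ case still had to be pushed down from $2/3$ to $1/2$ via an explicit computation in Frobenius groups — here the value $2/3$ is exactly what the index-$2$ estimate produces, so no further case analysis is needed. The single computation to carry out carefully is the tightness example, confirming that equality $\spr(G)=2/3$ is genuinely attained at $[G:\mathbf{F}(G)]=2$.
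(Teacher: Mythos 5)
Your proposal is correct and is essentially the paper's own proof: the same split of the sum over $\mathbf{F}(G)$ and its complement, the same application of Corollary \ref{fitting un terzo} to get $\spr(G) \leq \tfrac{1}{3} + \tfrac{2}{3}\cdot\tfrac{|\mathbf{F}(G)|}{|G|} \leq \tfrac{2}{3}$ for nonnilpotent $G$, and the same tightness example $S_3$ (which you work out more explicitly than the paper does).
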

\begin{proof}
This is just a calculation which follows easily from Corollary \ref{fitting un terzo}. Let $G$ be a nonnilpotent group: then $|\mathbf{F}(G)| \leq |G|/2$. Thus
\begin{equation*}
\begin{split}
\spr(G) & = \frac{1}{|G|}\sum_{g \in G} \spr_G(x)= \frac{1}{|G|}\sum_{g \in \mathbf{F}(G)} \spr_G(x) + \frac{1}{|G|}\sum_{g \notin \mathbf{F}(G)} \spr_G(x) \\
& \leq \frac{|\mathbf{F}(G)|}{|G|} + \frac{1}{3} \frac{|G \setminus \mathbf{F}(G)|}{|G|} = \frac{1}{3}+ \frac{2}{3}\frac{|\mathbf{F}(G)|}{|G|} \leq \frac{2}{3}.
\end{split}
\end{equation*}
The fact that the bound is tight follows from an easy calculation that gives $\spr(S_3)=2/3$.
\end{proof}

\section{Some examples and remarks}

In this section we first of all explain why it is crucial for our proof of Theorem \ref{dn12 senza classificazione} to replace $Nil_G(x)$ with $S_G(x)$. Looking at Gustafson's proof about the degree of commutativity (\cite{gustafson:degcom}) we find that a fundamental fact is that if $x \in G$ satisfies $|C_G(x)|>|G|/2$ then $x \in Z(G)$, that is $C_G(x)=G$. Corollary \ref{fitting un terzo} gives a similar result concerning $S_G(x)$: if $|S_G(x)|>|G|/3$ then $x \in \mathbf{F}(G)$, that is $S_G(x)=G$. It is not difficult to see that the elements $x$ such that $Nil_G(x)=G$ are exactly the elements in $\zeta_\omega(G)$, the hypercenter of $G$. The question could then be asked if there is a constant $c >0$ such that if $|Nil_G(x)|/|G|>c$ then $x \in \zeta_\omega (G)$. The following proposition shows that such a constant does not exist.

\begin{prop}
There exists a sequence of groups $(G_k)_{k \in \mathbb{N}}$ together with $x_k \in G_k$ such that $Z(G_k)=1$ and 
\[
\lim_{k \rightarrow \infty} \frac{|Nil_{G_k}(x_k)|}{|G_k|} = 1. 
\]
\end{prop}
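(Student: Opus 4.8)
The plan is to produce the groups explicitly as split extensions $G_k = P_k \rtimes \langle h_k\rangle$, where $P_k$ is a $p$-group ($p$ an odd prime) and $h_k$ is an automorphism of order $q$ coprime to $p$, and to take $x_k$ a suitable noncentral element of $P_k = O_p(G_k)$. The engine of the computation is the following description of $Nil_G(x)$ for $x \in O_p(G)$, which I would isolate as a preliminary lemma: since $x \in O_p(G)$, for every $y$ the subgroup $\langle x, y_p\rangle$ lies in the $p$-group $O_p(G)\langle y_p\rangle$, so $\langle x,y\rangle$ is nilpotent if and only if the $p'$-part $y_{p'}$ centralizes $x$. Hence $Nil_G(x) = \{y \in G : [x, y_{p'}]=1\}$, and a direct count over $p'$-parts gives, for $G = P \rtimes \langle h\rangle$ with $x \in C_P(h)$,
\[
\frac{|Nil_G(x)|}{|G|} = \frac{1}{q}\left(1 + \sum_{j=1}^{q-1}\frac{|x^{P}\cap C_P(h^j)|}{|x^{P}|}\right).
\]
Thus the whole problem reduces to building $P_k, h_k, x_k$ for which $Z(G_k)=1$ while the fraction of $P_k$-conjugates of $x_k$ fixed by each nontrivial power of $h_k$ tends to $1$.

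Next I would record the two constraints and their tension. Triviality of the center forces $Z(P_k)^{\langle h_k\rangle} = Z(G_k) = 1$, i.e. $h_k$ acts fixed-point-freely on $Z(P_k)$; convergence of the ratio to $1$ forces $h_k$ to fix almost every conjugate of $x_k$, hence to act almost trivially on the set of shifts $x_k^{-1}x_k^{w} \in [x_k, P_k]$. These two demands are incompatible as soon as the shifts are central: if $P_k$ has class $\le 2$ then $[x_k, P_k]\le Z(P_k)$, and since $[x_k,-]$ is then a homomorphism into $Z(P_k)$ commuting with $h_k$, one checks that $x_k^{w}$ is $h_k$-fixed exactly when $[x_k,w]\in Z(P_k)^{\langle h_k\rangle}=1$, i.e. only for $w \in C_{P_k}(x_k)$; the fraction in the display is then precisely $1/|x_k^{P_k}| \le 1/p$ and the ratio stays bounded away from $1$. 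So I would explicitly rule out the abelian and class-$2$ cases and pass to class $\ge 3$.

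The construction itself then targets $p$-groups of class $3$ in which $[x_k,P_k]\cap Z(P_k)=1$, so that the shift space $[x_k,P_k]$ and the center $Z(P_k)$ are essentially independent and may carry opposite behaviour under $h_k$: I want $h_k$ to invert $Z(P_k)$ (no fixed points) but to act as the identity on a subgroup of $[x_k,P_k]$ of index tending to $1$. Concretely I would look for a family (for instance quotients of a free class-$3$ group of exponent $p$, or an explicit unitriangular model) carrying an involution $h_k$ realizing exactly this splitting, and take $x_k$ with large centralizer in $P_k$ so that $[x_k,P_k]$ is small and easy to control. With $q = 2$ the ratio is $\tfrac12(1+f_k)$, where $f_k = |x_k^{P_k}\cap C_{P_k}(h_k)|/|x_k^{P_k}| = |C_{[x_k,P_k]}(h_k)|/|[x_k,P_k]|$, and driving $f_k \to 1$ finishes the proof.

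The main obstacle I expect is precisely the existence and compatibility of the automorphism $h_k$: one must exhibit, inside a single $p$-group, a subgroup $[x_k,P_k]$ meeting the center trivially on which an involution acts (almost) trivially, while the same involution inverts the center, and one must verify that this prescription genuinely extends to an automorphism of $P_k$ respecting all the class-$3$ commutator relations, rather than merely to a linear map on the abelianization. Checking well-definedness of $h_k$ and the equality $[x_k,P_k]\cap Z(P_k)=1$ for an explicit growing family, together with the bookkeeping of $|x^P\cap C_P(h^j)|$ when $q>2$, is where the real work lies; everything else is the elementary counting packaged in the displayed formula.
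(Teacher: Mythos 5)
Your two preliminary reductions are correct and worth keeping: for $x\in O_p(G)$ one indeed has $Nil_G(x)=\lbrace y\in G \ | \ [x,y_{p'}]=1\rbrace$, the displayed counting formula for $G=P\rtimes\langle h\rangle$ follows from it by splitting each coset $Ph^j$ into $p$-parts and $p'$-parts, and your analysis of the class $\le 2$ case is also right. The problem is the construction you then outline, and it is not merely that you leave ``the real work'' undone: what you propose to build cannot exist. The condition your class-$3$ blueprint targets, $[x_k,P_k]\cap Z(P_k)=1$ with $x_k\notin Z(P_k)$, is self-contradictory. The subgroup generated by the shifts $x^{-1}x^{w}$, $w\in P$, is $[\langle x\rangle,P]$; it is normal in $P$ (use $[x,w]^u=[x,u]^{-1}[x,wu]$), it is nontrivial because $x\notin Z(P)$, and in a finite $p$-group every nontrivial normal subgroup meets the center nontrivially. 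So the ``shift space'' can never avoid $Z(P_k)$.

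Worse, the obstruction is not specific to your blueprint: in your framework the ratio can never exceed $\frac12\bigl(1+\frac1p\bigr)$, for any choice of $P_k$, $h_k$ and any nontrivial $x_k\in P_k$, so no sequence of this shape can have ratio tending to $1$. Write your formula as $\frac{|Nil_G(x)|}{|G|}=\frac1q\sum_{j=0}^{q-1}m_j$ with $m_0=1$ and $m_j=|x^P\cap C_P(h^j)|/|x^P|$, and set $Q=\langle x^P\rangle$. If $m_j\neq 0$, pick $x_j\in x^P\cap C_P(h^j)$; since $\langle h^j\rangle$ acts coprimely on the solvable group $P$, Glauberman's lemma gives that $P$-conjugate elements of $C_P(h^j)$ are already $C_P(h^j)$-conjugate, so $x^P\cap C_P(h^j)=x_j^{C_P(h^j)}$ and
\[
m_j=\frac{[C_P(h^j):C_P(h^j)\cap C_P(x_j)]}{[P:C_P(x_j)]}=\frac{|C_P(h^j)\,C_P(x_j)|}{|P|},
\]
a power of $1/p$. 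Thus $m_j=1$ precisely when $x^P\subseteq C_P(h^j)$, i.e.\ when $[Q,h^j]=1$, and otherwise $m_j\le 1/p$. But $Q$ is a nontrivial normal subgroup of $P$, so $Q\cap Z(P)\neq 1$, and if $[Q,h]=1$ then any nontrivial element of $Q\cap Z(P)$ would be central in $G$, against $Z(G)=1$. Hence $C_{\langle h\rangle}(Q)$ is a \emph{proper} subgroup of $\langle h\rangle$, so $m_j=1$ for at most $q/2$ values of $j$, and
\[
\frac{|Nil_G(x)|}{|G|}\ \le\ \frac12\Bigl(1+\frac1p\Bigr)\ \le\ \frac23 \qquad (p \text{ odd}).
\]
In particular your $f_k$ satisfies $f_k\le 1/p$ in \emph{every} nilpotency class, not only in class $\le 2$: the class-$2$ versus class-$3$ distinction on which your plan rests is illusory, and the bound is already attained by the extraspecial group of order $p^3$ with an involution fixing one generator and inverting the other. (A similar argument over representatives of the $P$-classes of $p'$-elements caps the ratio by $\frac12+\frac1p$ if $\langle h\rangle$ is replaced by an arbitrary $p'$-group.) This explains the shape of the paper's example, which your framework excludes: there $x\in V=O_2(G)$, but the Sylow $2$-subgroup $V\mathcal{G}$ is \emph{not normal} and is much larger than $O_2(G)$. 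By your own lemma all $2$-elements of $G$ lie in $Nil_G(x)$ for free, the elements with nontrivial $2'$-part are confined to the small subgroup $VP$ ($|P|=p$ a Zsigmondy prime, $\mathcal{G}$ a huge Galois $2$-group), and the center dies because two fixed-point-free actions force every element to have prime order. A repair of your proof should therefore keep your lemma and formula but abandon the ``normal Sylow $p$-subgroup extended by a cyclic $p'$-group'' shape, making instead almost every element of $G_k$ a $p$-element while keeping $O_p(G_k)$ far smaller than a Sylow $p$-subgroup.
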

\begin{proof}
For $k \in \mathbb{N}$ and $k \geq 2$, let $n=2^k$. Moreover let $\mathbb{K}=\mathbb{F}_{2^n}$ be the field with $2^n$ elements and $V$ be the additive group of $\mathbb{K}$, so that $V$ is an elementary abelian group of size $2^n$.

By Zsigmondy's theorem there exists a prime $p$ which divides $2^n-1$ and doesn't divide $2^l-1$ for any $1 \leq l < n$. Let $P=\langle x \rangle$ be the subgroup of order $p$ in the multiplicative group $\mathbb{K}^\times$. $P$ acts fixed point freely on $V$ by multiplication and the elements of the group $V \rtimes P$ have order either $2$ or $p$.

Let $\mathcal{G}=Gal(\mathbb{K}|\mathbb{F}_{2})$, a cyclic group of order $n=2^k$. Then $\mathcal{G}$ acts both on $V$ and on $P$. If $\sigma \in \mathcal{G}$ is such that $x^\sigma=x$ then $x \in \mathbb{E}=\fix_{\mathbb{K}}(\langle \sigma \rangle)$ the field fixed by $\sigma$. Since $x \notin \mathbb{F}_2$ we have $\mathbb{E}>\mathbb{F}_2$. By the choice of $p$, and since $|x|=p$ has to divide $|\mathbb{E}|-1$, we have that $\mathbb{E}=\mathbb{K}$ so that $\sigma=1$, i.e., $\mathcal{G}$ acts fixed point freely on $P$.

We can consider the group $G=(V \rtimes P)\rtimes \mathcal{G}$, whose order is $2^{n+k}p$.

It is easy to see that there are not any elements of composite order in $G$. In particular $Z(G)=1$. Moreover $N_G(S)=S$ for all $S \in Syl_2(G)$ and if $S_1,S_2 \in Syl_2(G)$, $S_1 \neq S_2$, then $S_1 \cap S_2=V$.
Then $V=O_2(G)$ and so for all $1 \neq v \in V$ 
\[
Nil_G(v)=\bigcup_{S \in Syl_2(G)} S= V \cup (G \setminus (VP)).
\]
Finally 
\[
\frac{|Nil_G(v)|}{|G|}=\frac{2^n+2^{n+k}p-2^np}{2^{n+k}p}= 1-\frac{p-1}{2^{k}p},
\]
which tends to $1$ as $k$ tends to infinity.
\end{proof}

In \cite{lennox:subnormal}, page 238, some candidates for the role of \textit{subnormalizer} are defined, other then the one we used (Definition \ref{subnormalizer}). For example, getting inspiration by the Baer-Suzuki theorem, $S_G^1(H)$ is defined as follows
\[
S_G^1(H) = \lbrace g \in G \ | \ H \sn \langle H,H^g \rangle \rbrace.
\]
As explained in \cite{casolo:subnorsolv} the cardinality of this set can be written as 
\[
|S_G^1(H)|= \delta_G(H)|N_G(H)|
\]
where
\[
\delta_G(H)=\lbrace H^g \ | \ H \sn \langle H,H^g \rangle \rbrace.
\]
The following example shows that there is not an equivalent of Corollary \ref{fitting un terzo} for $S_G^1(x)$, that is a probabilistic version of Baer-Suzuki theorem.

\begin{ese} Let $n=2k$ for $k \in \mathbb{N}, k\geq 2$ and let $G=S_n, x=(1,2)$. We want to count the number of transpositions that generates a $2$-group together with $x$. If $y$ is such a transposition then $y$ commutes with $x$, because otherwise $xy$ would be a $3$-cycle. Then 
\[
\delta_G(x)= \left| \lbrace x \rbrace \cup \lbrace (i,j) \ | \ 2<i<j \leq n \rbrace \right| = 1+\frac{(n-2)(n-3)}{2}
\]
and so
\[
\frac{|S_G^1(x)|}{|G|} = \frac{1+\frac{(n-2)(n-3)}{2}}{\frac{n(n-1)}{2}}
\]
which tends to $1$ as $n$ goes to infinity.
\end{ese}

\subsection*{Acknowledgements} This note is drawn from the author’s PhD thesis which was undertaken at Università degli Studi di Firenze, and written under the patient supervision of Carlo Casolo, to whom a great deal of thanks is owed. The author would also like to thank Francesco Fumagalli and Eugenio Giannelli for their valuable comments and suggestions.

\bibliographystyle{plain}
\bibliography{biblio.bib}

\begin{thebibliography}{1}

\bibitem{casolo:subnorsolv}
Carlo Casolo.
\newblock Subnormalizers in finite groups.
\newblock {\em Communications in Algebra}, (18, n.11), 1990.

\bibitem{casolo:subnor}
Carlo Casolo.
\newblock On the subnormalizer of a $p$-subgroup.
\newblock {\em Journal of Pure and Applied Algebra}, (77, n.3), 1992.

\bibitem{guralnick:solvable}
Robert~M. Guralnick and John~S. Wilson.
\newblock The probability of generating a finite soluble group.
\newblock {\em Proceedings of the London Mathematical Society}, (81, n.2),
  2000.

\bibitem{gustafson:degcom}
William~H. Gustafson.
\newblock What is the probability that two group elements commute?
\newblock {\em American Mathematical Monthly}, (80), 1973.

\bibitem{lennox:subnormal}
John~C. Lennox and Stewart~E. Stonehewer.
\newblock {\em Subnormal subgroups of groups}.
\newblock Oxford University Press, 1987.

\end{thebibliography}

\end{document}